\newcommand{\eqref}[1]{(\ref{#1})}
\newtheorem{theorem}{Theorem}[section]
\newtheorem{lemma}[theorem]{Lemma}
\newtheorem{proposition}[theorem]{Proposition}
\newcommand{\Z}{\mathbb{Z}}
\renewcommand{\P}{\mathbb{P}_p}
\renewcommand{\S}{\mathcal{S}}
\newcommand{\NN}{\mathcal{N}}
\newcommand{\e}{\varepsilon}
\newcommand{\HH}{\mathcal{H}}
\begin{document}
\begin{frontmatter}

\title{Sharp metastability threshold for an anisotropic bootstrap
percolation model}
\runtitle{Anisotropic bootstrap percolation}

\begin{aug}
\author[A]{\fnms{H.} \snm{Duminil-Copin}\thanksref{T1}\ead[label=e1]{hugo.duminil@unige.ch}}
\and
\author[B]{\fnms{A. C. D.} \snm{Van Enter}\corref{}\ead[label=e2]{aenter@phys.rug.nl}}
\thankstext{T1}{Supported in part by ANR Grant BLAN06-3-134462, the
ERC AG CONFRA, as well as by the Swiss {FNS}.}
\runauthor{H. Duminil-Copin and A. C. D. Van Enter}
\affiliation{Universit\'e de Gen\`eve and Rijksuniversiteit Groningen}
\address[A]{D\'epartement de Math\'ematiques\\
Universit\'e de Gen\`eve\\
Gen\`eve\\
Switzerland\\
\printead{e1}} 
\address[B]{Johann Bernoulli institute\\
Rijksuniversiteit Groningen\\
9747 AG Groningen\\
The Netherlands\\
\printead{e2}}
\end{aug}

\received{\smonth{11} \syear{2010}}
\revised{\smonth{9} \syear{2011}}

%
\begin{abstract} Bootstrap percolation models have been extensively
studied during the two past decades. In this article, we study the
following ``anisotropic'' bootstrap percolation model: the neighborhood
of a point $(m,n)$ is the set
\[
\{(m+2,n),(m+1,n),(m,n+1),(m-1,n),(m-2,n),(m,n-1)\}.
\]
At time 0, sites are occupied with probability $p$. At each time step,
sites that are occupied remain occupied, while sites that are not
occupied become occupied if and only if three of more sites in their
neighborhood are occupied. We prove that it exhibits a sharp
metastability threshold. This is the first mathematical proof of a
sharp threshold for an anisotropic bootstrap percolation model.
\end{abstract}

\begin{keyword}[class=AMS]
\kwd{60K35}
\kwd{83B43}
\kwd{83C43}
\end{keyword}
\begin{keyword}
\kwd{Bootstrap percolation}
\kwd{sharp threshold}
\kwd{anisotropy}
\kwd{metastability}
\end{keyword}

\end{frontmatter}

\section{Introduction}\label{sec1}

\subsection{Statement of the theorem}\label{sec11}
Bootstrap percolation models are interesting models for crack
formation, clustering phenomena, metastability and dynamics of glasses.
They also have been used to describe the phenomenon of jamming (see,
for example,~\cite{Toninelli}), and they are a major ingredient in the
study of so-called kinetically constrained models; see, for example,
\cite{GST}. Other applications are in the theory of sandpiles \cite
{FLP10} and in the theory of neural nets
\cite{TE09,Am10}. Bootstrap percolation was introduced in \cite
{ChalupaLeathReich} and has been an object of study for both physicists
and mathematicians. For some of the earlier results, see, for example,
\cite{ADE90,AizenmanLebowitz,CerfCirillo,GravnerMcdonald,CerfManzo,dGLD09,Enter,Schonmann,Sch92}.

The simplest model is the so-called \textit{simple bootstrap
percolation} on $\mathbb{Z}^2$. At time~0, sites of $\mathbb{Z}^2$
are occupied with probability $p\in(0,1)$ independently of each other.
At each time increment, sites become occupied if at least two of their
nearest neighbors are occupied. The behavior of this model is now
well-understood: the model exhibits a sharp metastability threshold.
Nevertheless, slight modifications of the update rule provide
challenging problems, and the sharp metastability threshold remains
open in general. A few models have been solved, including simple
bootstrap percolation and the modified bootstrap percolation in every
dimension, and so-called balanced dynamics in two dimensions \cite
{Holroydboot,Holroydmodboot,BaloghBollobasMorris,HolroydLiggettRomik,BaloghBollobasDuminil-CopinMorris,BBM2,BBM3,Duminil-CopinHolroyd}. The
case of anisotropic dynamics, in which the neighborhood of a point is
not invariant under a ninety-degree rotation in a lattice plane (even
in two dimensions), has so far eluded mathematicians, and even the
scale at which the metastability threshold occurs is not clear.
(Note added in proof: For a general class of models in 3 dimensions, this scale
has since been established in~\cite{EnFe12}.)

In this article, we provide the first sharp metastability threshold for
an anisotropic model. We consider the following model, first introduced
in~\cite{GravnerGriffeath2}. The neighborhood of a point $(m,n)$ is
the set
\[
\{(m+2,n),(m+1,n),(m,n+1),(m-1,n),(m-2,n),(m,n-1)\}.
\]
At time 0, sites are occupied with probability $p$. At each time step,
sites that are occupied remain occupied, while sites that are not
occupied become occupied if and only if three of more sites in their
neighborhood are occupied. We are interested in the behavior (when the
probability $p$ goes to 0) of the (random) time $T$ at which 0 becomes
occupied. For earlier studies of two-dimensional anisotropic models,
whose results, however, fall short of providing sharp results, we refer
to~\cite{ADE90,Duarte,EnterHulshof,GravnerGriffeath2,GravnerGriffeath,Mountford93,Mountford95,Sch90a}.

\begin{theorem}\label{maintheorem}
Consider the dynamics described above, then
\[
\frac1p\biggl(\log\frac1p\biggr)^2\log T\stackrel
{(P)}{\longrightarrow}\frac1{12}\qquad \mbox{when }p\rightarrow0.
\]
\end{theorem}

This model and the simple bootstrap percolation have very different
behavior, as illustrated in Figure \ref{fig1}.

\begin{figure}

\includegraphics{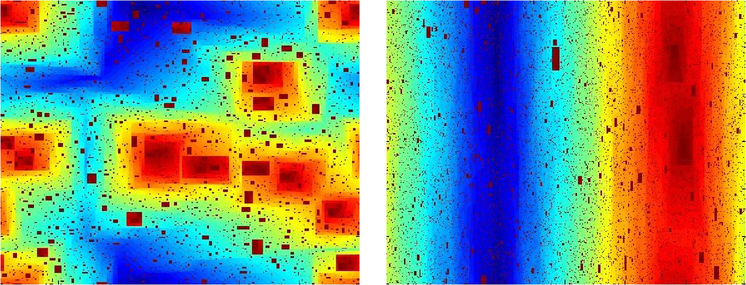}

  \caption{\textit{Left}: An example of a simple bootstrap
percolation's growth (red sites are the oldest, blue the youngest).
\textit{Right}: An example of the anisotropic model's growth.}\label{fig1}
\end{figure}

Combined with techniques of~\cite{Duminil-CopinHolroyd} we believe
that our proof paves the way toward a better understanding of general
bootstrap percolation models. More directly, the following models fall
immediately into the scope of the proof. Consider the neighborhood $\NN
_k$ of $(m,n)$ defined by
\[
\{(m+k,n),\ldots,(m+1,n),(m,n+1),(m-1,n),\ldots,(m-k,n),(m,n-1)\},
\]
and assume that the site $(m,n)$ becomes occupied as soon as $\NN_k$
contains $k+1$ occupied sites. Then the techniques developed in this
article extend to this context, showing that
\[
\frac1p\biggl(\log\frac1p\biggr)^2\log T\stackrel
{(P)}{\longrightarrow}\frac1{4(k+1)} \qquad \mbox{when }p\rightarrow0.
\]

\subsection{Outline of the proof}\label{sec12}

The time at which the origin becomes occupied is determined by the
typical distance at which a ``critical droplet'' occurs; here, a
critical droplet means a connected localized set of occupied sites,
which after growing via the bootstrap rule spans a macroscopic
proportion of the space. Furthermore, the typical distance of this
critical droplet is connected to the probability for a critical droplet
to be created. Such a droplet then keeps growing until it covers the
whole lattice with high probability. In our case, the droplet will be
created at a distance of order $\exp\frac1{6p}(\log\frac1p)^2$.
Determining this distance boils down to estimating how a rectangle,\vspace*{1pt}
consisting of an occupied double vertical column of length $\varepsilon
\frac1p \log\frac1p$, grows to a rectangle of size $1/p^2$ by $\frac
1{3p}\log\frac1p$.

Obtaining an upper bound is usually the easiest part: one must identify
``an almost optimal'' way to create the critical droplet. This way
follows a two-stage procedure. First, a vertical double line of height
$\frac\varepsilon{p}\log\frac1p$ is created.\vspace*{-1pt} Then, the rectangle
grows to size $1/p^2$ by $\frac1{3p}\log\frac1p$. We mention that
this step is quite different from the isotropic case.
Indeed, after starting as a vertical double line, the droplet grows in
a logarithmic manner; that is, it grows logarithmically faster in the
horizontal than in the vertical direction. On the one hand, the
computation of the integral determining the constant of the threshold
is easier than in~\cite{Holroydboot}. On the other hand, the growth
mechanism is more intricate.

The lower bound is much harder: one must prove that our ``optimal'' way
of spanning a rectangle of size $1/p^2$ by $\frac1{3p}\log\frac1p$
is indeed the best one. We combine existing technology with new
arguments. The proof is based on Holroyd's notion of hierarchy applied
to $k$-crossable rectangles containing internally filled sets (i.e.,
sets such that all their sites become eventually occupied when running
the dynamics restricted to the sets). A large rectangle will be
typically created by generations of smaller rectangles. These
generations of smaller rectangles are organized in a tree structure
which forms the hierarchy. In our context, the original notion must be
altered in many different ways (see the proof).

For instance, one key argument in Holroyd's paper is the fact that
hierarchies with many so-called ``seeds'' are unlikely to happen,
implying that hierarchies corresponding to one small seed were the most
likely ones. In our model, this is no longer true. There can be many
seeds, and a new comparison scheme is needed. A second difficulty comes
from the fact that there are stable sets that are not rectangles. We
must use the notion of being $k$-crossed (see Section \ref{sec3}). Even though
it is much easier to be $k$-crossed than to be internally filled, we
can choose the free parameter $k$ to be large enough in order to get
sharp enough estimates. We would like to mention a third difficulty.
Proposition 21 of~\cite{Holroydboot} estimates the probability that a
rectangle $R'$ becomes full knowing that a slightly smaller rectangle
$R$ is full. In this article, we need an analog of this proposition.
However, the proof of Holroyd's Proposition uses the fact that the
so-called ``corner region'' between the two rectangles is unimportant.
In our case, this region matters, and we need to be more careful about
the statement and the proof of the corresponding
proposition.\looseness=-1

The upper bound together with the lower bound result in the sharp
threshold. In a similar way as in ordinary bootstrap percolation
Holroyd's approach refined the analysis of Aizenman and Lebowitz, here
we refine the results of~\cite{GravnerGriffeath2} and~\cite
{EnterHulshof}. We find that the typical growth follows different
``strategies'' depending on which stage of growth we are in. The
logarithmic growth into a critical rectangle is the main new
qualitative insight of the paper. In~\cite{EnterHulshof}, long
vertical double lines were considered as critical droplets; before,
Schonmann~\cite{Schonmann} had identified a single vertical line for
the Duarte model as a possible critical droplet. The fact that these
are not the optimal ones is the main new step toward the identification
of the threshold, apart from the technical ways of proving it. Although
the growth pattern is thus somewhat more complex, the computation of
the threshold can still be performed.\looseness=-1

\subsection{Notations}\label{sec13}

Let $\P$ be the percolation measure with $p>0$. The initial (random)
set of occupied sites will always be denoted by $K$. We will denote by
$\langle K\rangle$ the final configuration spanned by a
set $K$. A set $S$ (for instance a line) is said to be \textit{occupied}
if it contains one occupied site (i.e., $S\cap K\neq\varnothing$). It
is \textit{full} if all its sites are occupied (i.e., $S\subset K$). A
set $S$ is \textit{internally filled} if $S\subset\langle K\cap
S\rangle$. Note that this notation is nonstandard and
corresponds to being internally spanned in the literature.

The neighborhood of 0 will be denoted by $\NN$. Observe that the
neighborhood of $(m,n)$ is $(m,n)+\NN$.

A \textit{rectangle} $[a,b]\times[c,d]$ is the set of sites in $\mathbb
{Z}^2$ included in the Euclidean rectangle $[a,b]\times[c,d]$. Note
that $a$, $b$, $c$ and $d$ do not have to be integers. For a rectangle
$R=[a,b]\times[c,d]$, we will usually\vadjust{\goodbreak} denote by
$(x(R),y(R))=(b-a, d-c)$ the \textit{dimensions} of the rectangle. When
there is no possible confusion, we simply write $(x,y)$. A \textit{line}
of the rectangle $R$ is a set $\{(m,n)\in R:n=n_0\}$ for some $n_0$
fixed. A \textit{column} is a set $\{(m,n)\in R:m=m_0\}$ for some $m_0$ fixed.

\subsection{Probabilistic tools}\label{sec14}

There is a natural notion of increasing events in $\{0,1\}^{\Z^2}$: an
event $A$ is \textit{increasing} if for any pair of configurations
$\omega\leq\omega'$---every occupied site in $\omega$ is occupied
in $\omega'$---such that $\omega$ is in $A$, then $\omega'$ is in
$A$. Two important inequalities related to increasing events will be
used in the proof: the first one is the so-called \textit{FKG
inequality}. Let $A$ and $B$ be two increasing events, then
\[
\P(A\cap B)\geq\P(A)\P(B).
\]
The second is the \textit{BK inequality}. We say that two events occur
disjointly if for any $\omega\in A\cap B$, it is possible to find a
set $F$ so that $\omega_{|F}\in A$ and $\omega_{|F^c}\in B$ (the
restriction means that the occupied sites of $\omega_{|F}$ are exactly
the occupied sites of $\omega$ which are in $F$). We denote the
disjoint occurrence by $A\circ B$ (we denote $A_1\circ\cdots\circ A_n$ for
$n$ events occurring disjointly). Then
\[
\P(A\circ B)\leq\P(A)\P(B).
\]
We refer the reader to the book~\cite{Grimmett} for proofs and a
complete study of percolation models.

We will also use the following easy instance of Chernoff's inequality.
For every $\e>0$, there exists $p_0>0$ such that for every $p<p_0$ and
$n\geq1$, the probability of a binomial variable with parameters $n$
and $p$ being larger than $\e n$ is smaller than~$e^{-n}$.

\section{\texorpdfstring{Upper bound of Theorem \protect\ref{maintheorem}}{Upper bound of Theorem 1.1}}\label{sec2}

A rectangle $R$ is \textit{horizontally traversable} if in each triplet
of neighboring columns, there exists an occupied site. A~rectangle is
\textit{north traversable} if for any (horizontal) line $\ell=\{
(k,n),k\in\mathbb Z\}$, there exists a site $(m,n)\in R\cap\ell$
such that $\{(m+1,n),(m+2,n),(m,n+1),(m-1,n),(m-2,n)\}$ contains two
occupied sites. It is \textit{south traversable} if for any (horizontal)
line $\ell$, there exists a site $(m,n)\in R\cap\ell$ such that $\{
(m-1,n),(m-2,n),(m,n-1), (m+1,n),(m+2,n)\}$ contains two occupied sites.

\begin{lemma}\label{traversibilityhorizontal}
Let $\varepsilon>0$, then there exist $p_0,y_0>0$ satisfying the
following: for any rectangle $R$ with dimensions $(x,y)$,
\begin{eqnarray*}
\exp[-(1+\varepsilon)xe^{-3py}] &\leq&\P(R\mbox{ is
horizontally traversable})\\
&\leq&\exp[-(1-\varepsilon
)(x-2)e^{-3py}]
\end{eqnarray*}
provided $y_0/p<y<1/(y_0p^2)$ and $p<p_0$.
\end{lemma}

\begin{pf}Let $u=1-(1-p)^y$ be the probability that there exists an
occupied site in a column. Let $A_i$ be the event that the $i$th column\vadjust{\goodbreak}
from the left is occupied. Then $R$ is horizontally traversable if and
only if the sequence $A_1,\ldots,A_x$ has no triple gap (meaning that
there exists $i$ such that $A_i$, $A_{i+1}$ and $A_{i+2}$ do not
occur). This kind of event has been studied extensively; see~\cite
{Holroydboot}. It is elementary to prove that
\[
\alpha(u)^{-x}\leq\P(R\mbox{ is horizontally traversable})\leq
\alpha(u)^{-(x-2)},
\]
where $\alpha(u)$ is the positive root of the polynomial
\[
X^3-uX^2-u(1-u)X-u(1-u)^2.
\]
When $py$ goes to infinity and $p^2y$ goes to 0 (and therefore $p$ goes
to 0), $u$~goes to~1 and
\[
\log\alpha(u)\sim- (1-u)^3\sim-e^{-3py}.
\]
The result follows readily.
\end{pf}

\begin{lemma}\label{traversibilityvertical}
For every $\varepsilon>0$, there exists $p_0,x_0>0$ satisfying the
following property: for any rectangle $R$ with dimensions $(x,y)$,
\[
\exp [(1+\varepsilon)y\log(p^2x)]\leq\P(R\mbox{ is
north traversable})
\]
provided $p<p_0$ and $x\leq1/(x_0p^2)$.
\end{lemma}

The same estimate holds for south traversability by symmetry under reflection.

\begin{pf}Let $n_0\in\mathbb{N}$. Let $v$ be the probability that
one line is occupied. In other words, the probability that there exists
a site $(m,n_0)$ such that two elements of $(m-2,n_0)$, $(m-1,n_0)$,
$(m,n_0+1)$, $(m+1,n_0)$ and $(m+2,n_0)$ are occupied. If $p^2x$ goes
to 0, the probability that there is such a pair of sites is equivalent
to the expected number of such pairs, giving
\[
\log v\sim\log[(8x-16)p^2]\sim\log[p^2x].
\]
(Here $8x-16$ is a bound for the number of such pairs.) Using the FKG
inequality, we obtain
\[
v^y\leq\P(R\mbox{ is north traversable}).
\]
Together with the asymptotics for $v$, the claim follows readily.
\end{pf}

For two rectangles $R_1\subset R_2$, let $I(R_1,R_2)$ be the event that
$R_2$ is internally filled whenever $R_1$ is full. This event depends
only on $R_2\setminus R_1$.

\begin{proposition}\label{lowerboundcrossing}
Let $\varepsilon>0$. Then there exist $p_0,k_0>0$ such that the
following holds: for any rectangles $R_1\subset R_2$ with dimensions
$(x_1,y_1)$ and $(x_2,y_2)$,
\[
\exp\bigl(-(1+\varepsilon)[(x_2-x_1)e^{-3py_2}-(y_2-y_1)\log
(p^2x_1)]\bigr)\leq\P(I(R_1,R_2)),
\]
providing $p<p_0$, $p^2x_1<1/k_0$ and $k_0/p<y_1<1/(k_0p^2)$.\vadjust{\goodbreak}
\end{proposition}

Take two rectangles $R_1\subset R_2$ such that $R_1=[a_1,a_2]\times
[b_1,b_2]$ and $R_2=[c_1,c_2]\times[d_1,d_2]$. Define sets
\begin{eqnarray*}
R_{\ell}&:=&[c_1,a_1]\times[d_1,d_2] \quad\mbox{and} \quad R_{r}:=[a_2,c_2]\times[d_1,d_2],\\
R_{t}&:=&[a_1,a_2]\times[b_2,d_2] \quad\mbox{and}\quad
R_{b}:=[a_1,a_2]\times[d_1,b_1],\\
H&:=& R_2\setminus\{(x,y):x\in[a_1,a_2]\mbox{ or }y\in[b_1,b_2]\}.
\end{eqnarray*}
The set $H$ is the \textit{corner region}; see Figure \ref{figcorner}.

\begin{figure}

\includegraphics{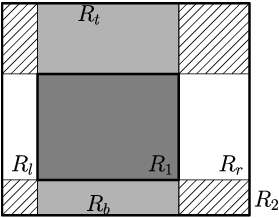}

\caption{The rectangles $R_\ell$ and $R_r$ are in light gray while
$R_t$ and $R_b$ are in white. The corner region~$H$ is hatched.}
\label{figcorner}
\end{figure}

\begin{pf}Let $\e>0$. Set $p_0$ and $k_0:=\max(x_0,y_0)$ in such a
way that Lemmata\ \ref{traversibilityhorizontal} and \ref
{traversibilityvertical} apply. Let $R_1\subset R_2$ two rectangles.
If $R_{\ell}$ and $R_{r}$ are horizontally traversable while $R_{t}$
and $R_{b}$ are respectively north and south traversable, then $R_2$ is
internally filled whenever $R_1$ is internally filled. Using the FKG inequality,
\begin{eqnarray*}
&&\P(I(R_1,R_2))\\
&&\quad\geq\P(R_{\ell}\ \mathrm{hor.\ trav.})\P
(R_{t}\ \mathrm{north\ trav.})\P(R_{r}\ \mathrm{hor.\ trav.})\P(R_{b}\ \mathrm{south\ trav.})\\
&&\quad\geq\exp\bigl(-(1+\varepsilon)
[(x_2-x_1)e^{-3py_2}-(y_2-y_1)\log(p^2x_1)]\bigr),
\end{eqnarray*}
using Lemmata \ref{traversibilityhorizontal} and \ref{traversibilityvertical} (the conditions of these lemmata are satisfied).
\end{pf}

\begin{proposition}[(Lower bound for the creation of a critical
rectangle)]\label{creationofacriticalrectangle}For any $\e>0$,
there exists $p_0>0$ such that for $p<p_0$,
\[
\P([0,p^{-5}]^2\mbox{ is internally filled})\geq\exp
\biggl[-\biggl(\frac16+\e\biggr)\frac1p\biggl(\log\frac1p
\biggr)^2\biggr].
\]
\end{proposition}

\begin{pf}Let $\e>0$. For any $p$ small enough, consider the
sequence of rectangles $(R_n^p)_{k_0\leq n\leq N}$
\[
R_n^p:=\bigl[0,p^{-1-3n/\log(1/p)}\bigr]\times[0,n/p],
\]
where $k_0$ is defined in such a way that Proposition \ref{lowerboundcrossing} applies with $\e$ and $N:=\frac13\log\frac1p-\log k_0$.
The following computation is straightforward, using Proposition~\ref{lowerboundcrossing}:
\begin{eqnarray*}
&&\prod_{n=k_0}^{N}\P[I(R_n^p,R_{n+1}^p)]\\
&&\qquad\geq\exp\Biggl[-(1+\varepsilon)\sum_{n=k_0}^{N}
\biggl(\bigl(p^{-1-3(n+1)/\log(1/p)}-p^{-1-3n/\log(1/p)}\bigr)e^{-3(n+1)}\\
&&\hspace*{185pt}\qquad{}-\frac1p \log\bigl(p^2p^{-1-3n/\log(1/p)}
\bigr)\biggr)\Biggr]\\
&&\qquad=\exp\Biggl(-(1+\varepsilon)\frac1p\biggl(\log\frac1p
\biggr)^2\\
&&\hspace*{32pt}\qquad{}\times\Biggl[\sum_{n=k_0}^{N} \frac{1-e^{-3}}{(\log(1/p)
)^2}+\sum_{n=k_0}^{N} \frac{1}{\log(1/p)}\biggl(1-3\frac{n}{\log(1/p)}\biggr)\Biggr]\Biggr).
\end{eqnarray*}
The first sum goes to 0 as $O(1/ \ln\frac1p)$\vspace*{-2pt} while the second one is
a Riemann sum converging to $\int_0^{1/3} (1-3y)\,dy=\frac16$. The
rectangle $[0,p^{-5}]^2$ is internally filled if all the following
events occur (we include asymptotics when $p$ goes to 0):
\begin{itemize}

\item$E$ the event that $\{0,1\}\times[0,\e\frac1p\log\frac1p]$
is full, of probability $\exp[-2\varepsilon\frac1p(\log
\frac1p)^2]$;

\item$F$ the event that $R=[0,\e\frac1p\log\frac1p]\times
[0,p^{-(1+\e)}]$ is horizontally traversable, of probability larger than

\[
\bigl[1-(1-p)^{\e(1/p)\log(1/p)}\bigr]^{p^{-(1+\e)}}\geq
\exp[-p^{\e-1-\e}]\geq\exp\biggl[-\varepsilon\frac1p\biggl(\log
\frac1p\biggr)^2\biggr];
\]

\item$G$ the intersection of $I(R^p_n,R^p_{n+1})$ for $0\leq n\leq
N-1$, of probability larger than $\exp[-(1+\varepsilon)(\frac
16+\varepsilon)\frac1p(\log\frac1p)^2]$ using the
computation above;

\item$H$ the event that $[0,p^{-2+\varepsilon}]\times[0,6\frac
1p\log\frac1p]$ is north traversable, with probability larger than
\[
\bigl(1-(1-p^2)^{p^{-2+\varepsilon}}\bigr)^{2(1/p)\log(1/p)}\approx
(p^\varepsilon)^{6(1/p)\log(1/p)}=\exp\biggl[-6\e\frac
1p\biggl(\log\frac1p\biggr)^2\biggr];
\]

\item$I$ the event that $[0,6\frac1p\log\frac1p]\times[0,p^{-5}]$\vspace*{-2pt}
is horizontally traversable, with probability larger than
$(1-(1-p)^{6(1/p)\log(1/p)})^{p^{-5}}$ and thus converging to $1$;

\item$J$ the event that $[0,p^{-5}]^2$ is north traversable with
probability larger than $[1-(1-p^2)^{p^{-5}}]^{p^{-5}}$ thus also
converging to 1.
\end{itemize}
The FKG inequality gives
\begin{eqnarray*}
\P([0,p^{-5}]^2\mbox{ is int. filled})&\geq&\P(E\cap
F\cap G\cap H\cap I\cap J)\\
&\geq&\exp\biggl[-(1+\varepsilon)
\biggl(\frac16+10\varepsilon\biggr)\frac1p\biggl(\log\frac1p
\biggr)^2\biggr]
\end{eqnarray*}
when $p$ is small enough.
\end{pf}

\begin{pf*}{Proof of the upper bound in Theorem \protect\ref{maintheorem}}Let $\e
>0$ and consider $A$ to be the event that any line or column of length
$p^{-5}$ intersecting the box $[-L,L]^2$ where $L=\exp[(\frac
1{12}+\e)\frac1p(\log\frac1p)^2 ]$ contains two
adjacent occupied sites. The probability of this event can be bounded
from below.
\[
\P(A)\geq[1-(1-p^2)^{p^{-5}/2}]^{8L^2}\approx\exp
[8L^2e^{-p^{-3}/2}]\longrightarrow1.
\]
(The factor 8 is due to the number of possible segments of length
$p^{-5}$.) Denote by~$B$ the event that there exists a translate of
$[0,p^{-5}]^2$ which is included in $[-L,L]^2$ and internally filled.
Applying Proposition \ref{creationofacriticalrectangle} and
dividing $[-L,L]^2$ into $(Lp^5)^2$ disjoint squares of size $p^{-5}$,
one easily acquires
\begin{eqnarray*}
\P(B)&\geq&1-\bigl[1-e^{-(1/6+\e)(1/p)(\log(1/p))^2}\bigr]^{(Lp^{5})^2}\\
&\approx& 1-\exp\bigl(-(Lp^{5})^2 e^{-(1/6+\e)(1/p) (\log(1/p))^2}\bigr)\longrightarrow1.
\end{eqnarray*}
Moreover, the occurrence of $A$ and $B$ implies that $\log T\leq(\frac
1{12}+2\e)\frac1p(\log\frac1p)^2$ for $p$ small enough. Indeed, a
square of size $p^{-5}$ is filled in less than $p^{-10}$ steps. After
the creation of this square, it only takes a number of steps of order
$\exp[(\frac1{12}+\e)\frac1p(\log\frac1p
)^2]$ to progress and reach 0, thanks to the event $A$. The FKG\vspace*{1pt}
inequality yields
\[
\P\biggl[\log T\leq\biggl(\frac1{12}+2\e\biggr)\frac1p\biggl(\log
\frac1p\biggr)^2\biggr]\geq\P(E\cap F)\geq\P(E)\P(F)\rightarrow1
\]
which concludes the proof of the upper bound.
\end{pf*}

\section{\texorpdfstring{Lower bound of Theorem \protect\ref{maintheorem}}{Lower bound of Theorem 1.1}}\label{sec3}

\subsection{Crossed rectangles}\label{sec31}

Two occupied points $x,y\in\Z^2$ are \textit{connected} if $x\in y+\NN
$. A set is \textit{connected} if there exists a path of occupied
connected sites with end-points being $x$ and $y$.

Two occupied points $x,y\in\mathbb{Z}^2$ are weakly connected if
there exists $z\in\mathbb{Z}^2$ such that $x,y\in z+\NN$. A set $S$
is \textit{weakly connected} if for any points $x,y\in S$, there exists
a path of occupied weakly connected points with end-points $x$ and $y$.

Let $k>0$. A rectangle $[a,b]\times[c,d]$ is $k$-\textit{vertically
crossed} if for every $j\in[c,d-k]$, the final configuration in
$[a,b]\times[j,j+k]$ knowing that $[a,b]\times[c,d]\setminus
[a,b]\times[j,j+k]$ is full contains a connected path from top to
bottom. A~rectangle $R$ is $k$-\textit{crossed} (or simply crossed) if
it is $k$-vertically crossed and horizontally traversable. Let
$A_k(R_1,R_2)$ be the event that $R_2$ is $k$-vertically crossed
whenever $R_1$ is full. Note that this event is contained in the event
that $R_\ell$ and $R_r$ are traversable, and $R_t$ and $R_b$ are
$k$-vertically crossed.

\begin{lemma}\label{traversibilityverticallower}
For every $\varepsilon>0$, there exist $p_0,Q,k>0$ satisfying the
following property: for any rectangle $R$ with dimensions $(x,y)$,
\begin{eqnarray*}
\P[R\mbox{ is }k\mbox{-vertically crossed}]&\leq&
p^{-k}Q^y\exp[(1-\varepsilon)y\log(p^2x)]\qquad \mbox{when } \frac1p\leq x,\\
\P[R\mbox{ is }k\mbox{-vertically crossed}]&\leq&
p^{-k}\exp[(1-\varepsilon)y\log p]\qquad   \mbox
{when }  x<\frac1p,
\end{eqnarray*}
providing $p<p_0$.
\end{lemma}

\begin{pf}Let $\e>0$ and set $k=\lfloor1/\e\rfloor
$. Consider first the rectangle $[0,x]\times[1,k]$ and the event that
there exists a connected path in the final configuration knowing that
$\mathbb{Z}\times(\Z\setminus[1,k])$ is full.
\end{pf}

\begin{claim*}In the initial configuration, there exist
$A_1,\ldots,A_r$ ($r\leq k-1$) disjoint weakly connected sets such that
$n_1+\cdots+n_r\geq k+r-1$ where $n_i\geq2$ is the cardinality of $A_i$.
\end{claim*}
\begin{pf}
We prove this claim by induction. For $k=2$,
the only way to cross the rectangle is to have a weakly connected set
of cardinality $2$. We define $A_1$ to be this set. For $k\geq3$,
there are three cases:

\textit{Case 1: No sites become occupied after time} 0: It implies
that the crossing from bottom to top is present in the original
configuration. Therefore, there exists a connected set crossing the
strip in the original configuration. Moreover, this set is of
cardinality at least $k$ since it must contain one site in each line at
least. Taking the connected subset of cardinality $k$ to be $A_1$, we
obtain the claim in this case.

\textit{Case 2: The first line or the last line intersects a full
weakly connected set of cardinality} 2: Assume that the first line
intersects a weakly connected set $S$ of cardinality 2. The rectangle
$[2,k]\times[0,x]$ is $(k-1)$-vertically crossed. There exist disjoint
sets $B_1,\ldots,B_r$ satisfying the conditions of the claim. If these
sets are disjoint from $S$, set $A_1=S$, $A_2=B_1,\ldots,A_{r+1}=B_r$.
If one set (say $B_1$) intersects~$S$, we set $A_1=B_1\cup S$,
$A_2=B_2,\ldots,A_r=B_r$. In any case the condition on the cardinality
is satisfied.

\textit{Case 3: Remaining cases}: There must exist three sites in
the same neighborhood (we call this set $S$), spanning $(m,n)\in
[0,x]\times[2,k-1]$ at time 1. The rectangles $[0,x]\times[1,n-1]$
and $[0,x]\times[n+1,k]$ are respectively $(n-1)$-vertically crossed
and $(k-n)$-vertically crossed. If $n\notin\{2,k-2\}$, then one can
use the induction hypothesis in both rectangles,\vadjust{\goodbreak} and perform the same
procedure as before. If $n=2$, then apply the induction hypothesis for
the rectangle above. The same reasoning still applies. Finally, if
$n=k-2$, one can do the same with the rectangle below.
\end{pf}

Let $C=C(k)$ be a universal constant bounding the number of possible
weakly connected sets of cardinality less than $k$ (up to translation).
For any weakly connected set of cardinality $n>1$, we have that the
probability to find such a set in the rectangle $[0,x]\times[1,k]$ is
bounded by $Cp^n(kx)$. We deduce using the BK inequality that
%
\begin{eqnarray}\label{zz}
\P\bigl[[0,x]\times[0,k]\ k\mbox{-vert.
cross.}\bigr]&\leq&\sum_{r=1}^k\Biggl(\sum_{n_1+\cdots+n_r\leq k+r-1}
\prod_{j=1}^r (kC)p^{n_j}x\Biggr)
\nonumber
\\[-8pt]
\\[-8pt]
\nonumber
&\leq&\sum_{r=1}^k (k+r)^r(kC)^r
p^{k-1} (px)^r.
\end{eqnarray}
First assume $px> 1$. We find
\begin{eqnarray*}
\P\bigl[[0,x]\times[0,k]\ k\mbox{-vert. cross.}
\bigr]&\leq&\sum_{r=1}^k (k+r)^r(kC)^r p^{2k-2}x^{k-1}\\
&\leq&(2k^3C)^k(p^2x)^{k-1}
\end{eqnarray*}
since $px>1$. We find
\[
\P\bigl[[0,x]\times[0,k]\ k\mbox{-vert. cross.}\bigr]\leq Q^k\exp
[-(1-\e)k\log(p^2x)]
\]
with $Q=2k^3C$.

Now, we divide the rectangle $R$ into $\lfloor y/k\rfloor
$ rectangles of height $k$. If $R$ is vertically crossed, then all the
rectangles are vertically crossed. Using the previous estimate, we obtain
\begin{eqnarray*}
\P[R\mbox{ is }k\mbox{-vertically crossed}
]&\leq&\prod_{i=1}^{\lfloor y/k\rfloor}\P(R_i\mbox{ is
}k\mbox{-vertically crossed})\\
&\leq& Q^{k\lfloor y/k\rfloor}\exp
\biggl[(1-\varepsilon)k\biggl\lfloor\frac yk\biggr\rfloor\log
(p^2x)\biggr].
\end{eqnarray*}
Using that the rectangle of height $k$ is $k$-vertically crossed with
probability larger than $p^k$, we obtain the result in this case.

If $xp<1$, then we can bound the right-hand term of \eqref{zz} by
$Cp^{k-1}$ and conclude the proof similarly.

Observe that when $x>1/p$, the rectangle will grow in the vertical
direction using $\ell$ disjoint weakly connected pairs of occupied
sites (if it grows by $\ell$ lines). When $x<1/p$, a rectangle will
grow in the vertical direction using one big weakly connected set of
$\ell$ occupied sites. From this point of view, the dynamics is very
different from the simple bootstrap percolation.

\begin{remark}
We have seen in the previous proof that being $k$-vertically crossed
involves only sites included in weakly connected sets of cardinality
two.
\end{remark}

This remark will be fundamental in the following proof.

For two rectangles $R_1\subset R_2$, define
\begin{eqnarray*}W^p(R_1,R_2)&=&\frac{p}{(\log(1/p)
)^2}[(x_2-x_1)e^{-3py_2}]\qquad  \mbox{if } \frac1{p^2}\leq x_2,\\
W^p(R_1,R_2)&=&\frac{p}{(\log(1/p))^2}\\
&&{}\times[(x_2-x_1)e^{-3py_2}-(y_2-y_1)\log(p^2x_2)] \qquad\mbox{if } \frac1p\leq x_2<\frac1{p^2},\\
W^p(R_1,R_2)&=&\frac{p}{(\log(1/p))^2}
[(x_2-x_1)e^{-3py_2}-(y_2-y_1)\log p] \qquad  \mbox
{if }  x_2<\frac1p.
\end{eqnarray*}

\begin{proposition}\label{upperboundcrossing}
Let $\varepsilon,T>0$, there exist $p_0,Q,k>0$
such that for any $p<p_0$ and any rectangles $R_1\subset R_2$ with
dimensions $(x_1,y_1)$ and $(x_2,y_2)$ satisfying
\[
\frac Tp \log\frac1p\leq y_2\leq\frac1p\log\frac1p,
\]
then
\[
\P[A_k(R_1,R_2)]\leq p^{-2k}Q^{y_2-y_1}\exp
\biggl[-(1-\varepsilon)\frac1p\biggl(\log\frac1p\biggr)^2W
^p(R_1,R_2)\biggr].
\]
\end{proposition}

\begin{pf}
Let $\varepsilon>0$ and set $p_0,Q,y_0,k$ given by Lemmata \ref
{traversibilityverticallower} and \ref{traversibilityhorizontal}
applied with $\varepsilon$. Note that $Q$ can be taken greater than 1.
Consider two rectangles $R_1\subset R_2$ satisfying the conditions of
the proposition. We will use that $p^2y_2$ goes to 0 and $y_2$ goes to
infinity (in particular, $y_0\leq y_2\leq p^{-2}/y_0$). We treat the
case $1/p\leq x_2<1/p^2$; the other cases are similar.

First assume
\[
\varepsilon(y_2-y_1)\log(p^2x_2)\leq-(1-\varepsilon)(x_2-x_1)e^{-3py_2}.
\]
The event $A_k(R_1,R_2)$ is included in the events that $R_t$ and $R_b$
are $k$-vertically crossed (these two events are independent). Using
Lemma \ref{traversibilityverticallower}, we easily deduce the claim via
\begin{eqnarray*}
\P[A_k(R_1,R_2)]&\leq&\exp[(1-\e
)(y_2-y_1)\log(p^2x_2)]\\
&\leq&\exp\bigl[-(1-\varepsilon)^2
\bigl((x_2-x_1)e^{-3py_2}-(y_2-y_1)\log(p^2x_2)\bigr)\bigr].
\end{eqnarray*}

We now assume
\[
\varepsilon(y_2-y_1)\log(p^2x_2)\geq-(1-\varepsilon)(x_2-x_1)e^{-3py_2}.
\]
Let $Y$ be the number of vertical lines containing one occupied site of
$H$ weakly connected to another occupied site. We have
%
\begin{eqnarray}\label{decomposition}\P[A_k(R_1,R_2)]&\leq&
\P[A_k(R_1,R_2)\mbox{ and }
Y\leq\e(x_2-x_1)]
\nonumber
\\[-8pt]
\\[-8pt]
\nonumber
&&{}+ \P
[Y\geq\e(x_2-x_1)].
\end{eqnarray}

\textit{Bound on the second term.} Note that the probability
$\alpha$ that a line contains one occupied site in $H$ weakly
connected with another occupied site behaves like $Cp^2(y_2-y_1)$
(where $C$ is universal) and therefore goes to 0 when $p$ goes to 0.
The probability of $Y\geq\e(x_2-x_1)$ is bounded by the probability
that a binomial variable with parameters $n=x_2-x_1$ and $\alpha
=Cp^2(y_2-y_1)$ is larger than $\frac{\e}{3}(x_2-x_1)$. Invoking
Chernoff's inequality, we find
\[
\P[Y\geq\e(x_2-x_1)]\leq\exp[-(x_2-x_1)]
\]
for $p$ small enough. Since $e^{-3py_2}$ converges to 0, we obtain for
$p$ small enough,
%
\begin{eqnarray}\label{bigY}
&&\P[Y\geq\e(x_2-x_1)]\nonumber\\
&&\qquad\leq\exp\biggl[-\frac
{1-\varepsilon}\varepsilon(x_2-x_1)e^{-3py_2}\biggr]
\\
&&\qquad\leq\exp\bigl[-(1-\varepsilon)
\bigl((x_2-x_1)e^{-3py_2}-(y_2-y_1)\log(p^2x_2)\bigr)\bigr].\nonumber
\end{eqnarray}

\textit{Bound on the first term.} Let $E$ be the event that $R_t$
and $R_b$ are $k$-vertically crossed and $Y\leq\varepsilon(x_2-x_1)$.
We know that
%
\begin{eqnarray}\label{1}
&&\P[A_k(R_1,R_2)\mbox{ and }Y\leq\e
(x_2-x_1)]\nonumber\\
&&\quad=\P[R_{\ell}\mbox{ and }R_r\mbox{ hor. trav.}|E]\P
[E]\hspace*{-35pt}\\
&&\quad\leq\P[R_{\ell}\mbox{ and }R_r\mbox{ hor. trav.}|E
]\P[R_t\mbox{ and }R_b\mbox{ are }k\mbox{-vertically
crossed}].\nonumber\hspace*{-35pt}
\end{eqnarray}

We want to estimate the first term of the last line. Let $\Omega$ be
the (random) set of all pairs of weakly connected occupied sites in
$H$. Conditioning on $E$ corresponds to determining the set $\Omega$
thanks to the remark preceding the proof. Let $\omega$ be a possible
realization of $\Omega$. Slice $R_{\ell}\cup R_{r}$ into $m$
rectangles $R_1,\ldots,R_m$ (with widths $x^{(i)}$) such that:
\begin{itemize}
\item no element of $\omega$ intersects these rectangles;
\item all the lines that do not intersect $\omega$ belong to a
rectangle $R_i$;
\item$m$ is minimal for this property (note that $m\leq2\e[x_2-x_1]$).
\end{itemize}
For each of these rectangles, conditioning on $\{\Omega=\omega\}$
boils down to assuming that there are no full pairs in the corner
region, which is a decreasing event, so that via the FKG inequality,
\[
\P(R_i\mbox{ hor. trav.}|\Omega=\omega)\leq\P(R_i\mbox{ hor.
trav.})\leq\exp\bigl[-(1-\varepsilon)\bigl(x^{(i)}-2\bigr)e^{-3py_2}\bigr].
\]
Since $Y\leq\e(x_2-x_1)$, we know that
\[
x^{(1)}+\cdots+x^{(m)}\geq(1-3\e)(x_2-x_1).
\]
We obtain
\begin{eqnarray*}
&&\P[R_{\ell}\mbox{ and }R_{r}\mbox{ are
hor. trav.}|\Omega=\omega]\\
&&\qquad\leq\P[\mbox{rectangles
}R_i\mbox{ are all hor. trav.}|\Omega=\omega]\\
&&\qquad\leq\prod_{i=1}^m\exp-\bigl[(1-\varepsilon
)\bigl(x^{(i)}-2\bigr)e^{-3py_2}\bigr]\\
&&\qquad\leq\exp[-(1-\e)(1-7\e)(x_2-x_1)e^{-3py_2}].
\end{eqnarray*}
By summing over all possible $\omega$, we find
%
\begin{equation}\qquad\P[R_{\ell}\mbox{ and }R_r\mbox{ hor.
trav.}|E] \leq\exp[-(1-\e)(1-7\e
)(x_2-x_1)e^{-3py_2}].\label{2}
\end{equation}
Using Lemma \ref{traversibilityverticallower} and inequality \eqref
{2}, inequality \eqref{1} becomes
\begin{eqnarray*}
&&\P[A_k(R_1,R_2)\cap\{Y\leq\e(x_2-x_1)\}]\\
&&\qquad\leq\exp[-(1-\e)^2(x_2-x_1)e^{-3py_2}
]p^{-2k}Q^{y_2-y_1}\\
&&\qquad\quad{}\times\exp[-(1-\e)(y_2-y_1)\log(p^2y_2)].
\end{eqnarray*}

The claim follows by plugging the previous inequality and inequality
\eqref{bigY} into inequality \eqref{decomposition}.
\end{pf}

\subsection{Hierarchy of a growth}\label{sec32}

We define the notion of hierarchies, and the specific vocabulary
associated to it. This notion is now well established. We slightly
modify the definition, weakening the conditions imposed in~\cite{Holroydboot}.
\begin{itemize}
\item\textit{Hierarchy, seed, normal vertex and splitter}: A \textit{hierarchy}
$\HH$ is a tree with vertex degrees at most three with
vertices $v$ labeled by nonempty rectangles $R_v$ such that the
rectangle labeled by $v$ contains the rectangles labeled by its
descendants. If the number of descendants of a vertex is 0, it is a
\textit{seed}, and if it is one, it is a $normal$ vertex [we denote by
$u\mapsto v$ if $u$ is a normal vertex of (unique) descendant $v$] and
if it is two or more, it is a \textit{splitter}. Let $N(\HH)$ be the
number of vertices in the tree.

\item\textit{Precision of a hierarchy}: A hierarchy \textit{of
precision} $t$ (with $t\geq1$) is a hierarchy satisfying these
additional conditions:

\begin{longlist}
\item[(1)] if $w$ is a seed, then $y(R_w)<2t$, if $u$ is a normal vertex or a
splitter, $y(R_u)\geq2t$;

\item[(2)] if $u$ is a normal vertex with descendant $v$, then $y(R_u)-y(R_v)
\leq2t$;

\item[(3)] if $u$ is a normal vertex with descendant $v$ and $v$ is a seed or
a normal vertex, then $y(R_u)-y(R_v) >t$;

\item[(4)] if $u$ is a splitter with descendants $v_1,\ldots,v_i$, there exists
$j$ such that $y(R_{u})-y(R_{v_j})> t.$\vadjust{\goodbreak}
\end{longlist}

\item\textit{Occurrence of a hierarchy}: Let $k>0$, a hierarchy
$k$-\textit{occurs} if all of the following events occur \textit{disjointly}:
\begin{longlist}
\item[(1)] $R_w$ is $k$-crossed for each seed $w$;

\item[(2)] $A_k(R_v,R_u)$ occurs for each pair $u$ and $v$ such that $u$ is normal;

\item[(3)] $R_u$ is the smallest rectangle containing $\langle
R_{v_1}\cup\cdots\cup R_{v_j}\rangle$ for every splitter~$u$
($v_1,\ldots,v_i$ are the descendants of $u$).
\end{longlist}
\end{itemize}

\begin{remark} In the literature, the precision of a hierarchy is an
element of~$\mathbb{R}^2$. In our case, we do not need to control the
$x$-coordinate.
\end{remark}

\begin{remark}
We can use the BK inequality to deduce that for any hierarchy $\HH$
and $k\geq1$,
\[
\P[\HH\ k\mbox{-occurs}]\leq\prod_{v \mathrm{seed}}\P
[R_v\ k\mbox{-crossed}]\ \prod_{u\mapsto w}\P
[A_k(R_w,R_u)].
\]
\end{remark}

The following lemmata are classical.

\begin{lemma}[(Number of hierarchies)] \label{numberofhierarchies}Let
$t\geq1$, the number $N_{t}(R)$ of hierarchies of precision $t$ for a
rectangle $R$ is bounded by
\[
N_{t}(R)\leq[x(R)+y(R)]^{c[y(R)/t]}
\]
where $c$ is a prescribed function.
\end{lemma}

\begin{pf}The proof is straightforward once we remark that the depth
of the hierarchy is bounded by $y(R)/t$ (every two steps going down in
the tree, the perimeter reduces by at least $t$). For a very similar
proof, see~\cite{Holroydboot}.
\end{pf}

\begin{lemma}[(Disjoint spanning)] \label{disjointspanning} Let $\S$
be an internally filled and connected set of cardinality greater than
$3$, then there exist $i$ disjoint nonempty \textit{connected} sets
$\S_1,\ldots,\S_i$ with $i\in\{2,3\}$ such that:
\begin{longlist}
\item[(i)] the strict inclusions $\S_1\subset\S,\ldots, \S_i \subset\S$ hold;

\item[(ii)] $\langle\S_1\cup\cdots\cup\S_i\rangle=\S$;

\item[(iii)] $\{\S_1 \mbox{ is internally filled}\} \circ\cdots
\circ\{\S_i \mbox{ is internally filled}\}$ occurs.
\end{longlist}
\end{lemma}

\begin{pf} Let $K$ be finite, $\langle K\rangle$ may
be constructed via the following algorithm: for each time step
$t=0,\ldots,\tau$, we find a collection of $m_t$ connected sets $\S
_1^t,\ldots,\S^t_{m_t}$ and corresponding sets of sites
$K_1^t,\ldots,K^t_{m_t}$ with the following properties:
\begin{longlist}
\item[(i)] $K^t_1,\ldots,K^t_{m_t}$ are pairwise disjoint;

\item[(ii)] $K_i^t \subset K$;\vspace*{1pt}

\item[(iii)] $\S^t_i=\langle K_i^t\rangle$ is connected;

\item[(iv)] if $i\neq j$, then we cannot have $\S_i^t \subset\S^t_j$;\vadjust{\goodbreak}

\item[(v)] $K \subset\S^t \subset\langle K\rangle$ where
\[
\S^t:=\bigcup_{i=1}^{m_t}\S_i^t;
\]

\item[(vi)] $\S^{\tau}=\langle K\rangle$.
\end{longlist}

Initially, the sets are just the individual sites of $K$: let $K$ be
enumerated as $K=\{x_1,\ldots,x_r\}$, and set $m_0=r$ and $\S
_i^0=K_i^0=\{x_i\}$, so that in particular $\S_0=K$.
Suppose that we have already constructed the sets $\S_1^{t},\ldots,\S
_{m_t}^{t}$, then:
\begin{longlist}
\item[(a)] if there exist $j$ sets $\S_{i_1}^t,\ldots, \S^t_{i_j}$ (with $2\leq
j\leq3$) such that the spanned set is connected, set $K'$ to be the
union of the previous sets and $S'$ the spanned set.\vspace*{1pt} We construct the
state $(\S_1^{t+1},K_1^{t+1}),\ldots,(\S
_{m_{t+1}}^{t+1},K_{m_{t+1}}^{t+1})$ at time $t+1$ as follows. From the
list $(\S_1^t,K_1^t),\ldots,(\S_{m_t}^t,K_{m_t}^t)$ at time $t$, delete
every pair $(\S_l^t,K_l^t)$ for which $\S_l^t \subset\S'$. Then add
$(\S',K')$ to the list. Next increase $t$ by 1 and return to step (a).

\item[(b)] else stop the algorithm and set $t=\tau$.
\end{longlist}

Properties (i)--(v) are obviously preserved by this procedure, and
$m_t$ is strictly decreasing with $t$, so the algorithm must eventually stop.

To affirm that property (vi) holds, observe that if $\langle
K\rangle\setminus\S^{\tau}$ is nonempty, then there exists
a site $y\in\langle K\rangle\setminus\S^{\tau}$such
that $y+\NN$ contains $3$ occupied sites in $\S^{\tau}$; otherwise
$y$ would not belong to $\langle K\rangle$ (since $y$
does not belong to $K$). These neighbors must lie in at least two
distinct sets $S_1,\ldots,S_i$ since $y$ is not in $\S^{\tau}$. Observe
that the set spanned by $S_1,\ldots,S_i$ is connected (any spanned site
remains connected to the set that spanned it). Therefore, these sets
are in an $i$-tuplet which corresponds to the case (a) of the algorithm
(since $y$ links the connected components). Therefore the algorithm
should not have stopped at time $\tau$.

Finally, to prove the lemma, note that we must have at least one time
step (i.e., $\tau\geq1$) since the cardinality of $\S$ is greater
than $N$. Considering the last time step of the algorithm (from time
$\tau-1$ to time $\tau$) and sets involved in the creation of $\S
'=\S^{\tau}=\langle K\rangle$. These sets fulfill all
of the required properties.~%
\end{pf}

To any connected set $\S$, one can associate the smallest rectangle,
denoted $[\S]$, containing it. For any $k\geq1$, if $\S$ is
internally filled, then $[\S]$ is $k$-crossed.

\begin{proposition} \label{hierarchy} Let $k\geq1$ and $t\geq3$, and
take any connected set $\S$ which is internally spanned, then some
hierarchy of precision $t$ with root-label $R_r=[\S]$ $k$-occurs.
\end{proposition}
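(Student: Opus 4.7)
The plan is to construct the required hierarchy recursively from the root by iterated application of Lemma \ref{disjoint spanning}, in the spirit of \cite{Holroydboot} adapted to the relaxed hierarchy definition used here. Label the root $r$ with $R_r := [\S]$ and tag it with $\T_r := \S$. At a generic vertex $v$ tagged with an internally filled connected set $\T_v$ satisfying $R_v = [\T_v]$, proceed as follows. If $y(R_v) < 2t$, declare $v$ a seed; the occurrence event (1) (``$R_v$ is $k$-crossed'') follows from the remark preceding the proposition. Otherwise, iterate Lemma \ref{disjoint spanning} along the \emph{dominant} branch: at each step $s$, replace the tracked set $\T^{(s)}$ by the disjoint-spanning child $\T^{(s)}_{j^*}$ whose bounding rectangle has maximum $y$-coordinate. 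This yields a chain $\T_v = \T^{(0)} \supsetneq \T^{(1)} \supsetneq \cdots$ with heights $y^{(s)} := y([\T^{(s)}])$.

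Stop the iteration at the first step $s^*$ such that the decomposition $\T^{(s^*)} = \langle \T^{(s^*)}_1 \cup \cdots \cup \T^{(s^*)}_{i_{s^*}}\rangle$ admits some $j$ with $y^{(s^*)} - y([\T^{(s^*)}_j]) > t$. Insert a normal edge from $v$ to a new vertex $w$ with $\T_w := \T^{(s^*)}$, $R_w := [\T^{(s^*)}]$, and declare $w$ a splitter with descendants labelled $[\T^{(s^*)}_j]$ and tagged with $\T^{(s^*)}_j$, then recurse on each such descendant. The precision conditions are checked as follows: (1) by the seed rule; (4) at $w$ by minimality of $s^*$; (3) for splitters because $[\T^{(s^*)}]$ is the smallest rectangle containing $\langle [\T^{(s^*)}_1] \cup \cdots \cup [\T^{(s^*)}_{i_{s^*}}]\rangle$ — indeed, under the 3-of-6 anisotropic rule, a one-site-thick ``halo'' around a full rectangle cannot gain enough neighbours to become occupied, so the closure of full sub-rectangles cannot escape their common bounding rectangle, and the latter is precisely $[\T^{(s^*)}]$. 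For (2) on the normal edge $v \to w$, observe that every step before $s^*$ reduces the tracked $y$-coordinate by at most $t$ (otherwise the previous step would already have met the stopping criterion), so the cumulative drop is at most $s^* \cdot t$; when this exceeds $2t$, subdivide the normal edge into a chain $v = v_0 \to v_1 \to \cdots \to v_m = w$ of normal vertices, each tagged with the appropriate $\T^{(s_\ell)}$ chosen so that consecutive $y$-drops lie in $(t, 2t]$, with the terminal vertex $w$ a splitter; condition (3) is then satisfied because each intermediate normal edge points to another normal vertex with $y$-drop exceeding $t$, and the last points to a splitter.

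The disjoint occurrence of events (1) and (2) — event (3) being deterministic — is inherited from the disjointness built into Lemma \ref{disjoint spanning}: at each application the events ``$\T^{(s)}_j$ is internally filled'' occur disjointly with witnessing sites inside the respective $\T^{(s)}_j$, and this disjointness propagates through the recursion. For a normal edge $v \to w$, the event $A_k(R_w, R_v)$ is witnessed by the occupied sites in $R_v \setminus R_w$ which lie in the non-dominant subsets $\T^{(s)}_j$ ($s < s^*$, $j \neq j^*$) produced during the iteration; these sites are disjoint from $\T^{(s^*)}$ by the disjointness in Lemma \ref{disjoint spanning}, and hence disjoint from the witnessing sites of the entire subtree rooted at $w$. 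The main technical obstacle I expect is the careful bookkeeping of witnessing sites along the chain of subdivided normal vertices, to ensure that every event in the hierarchy is witnessed by a set of sites disjoint from all others; this essentially mirrors the corresponding step in \cite{Holroydboot}, but must be done with attention to the fact that our normal edges can point either to splitters or (after subdivision) to further normal vertices.
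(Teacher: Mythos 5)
Your proof follows the same broad strategy as the paper's --- iterate Lemma~\ref{disjoint spanning}, stop according to some criterion, insert a splitter (possibly preceded by a chain of normal vertices), and recurse --- but your stopping criterion is genuinely different from the paper's, and this difference introduces a gap.

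The paper stops at the first time $T$ at which the \emph{cumulative} $y$-drop from the current root $R=R_v$ to the smallest rectangle $R'$ produced so far is at least $t$. Because the sets produced by repeated application of Lemma~\ref{disjoint spanning} eventually shrink to singletons (so $y(R')\to 0$ while $y(R)\geq 2t$), this criterion is guaranteed to be met. You instead stop at the first step $s^*$ at which the \emph{single-step} decomposition of the currently tracked set $\T^{(s^*)}$ has some child dropping by more than $t$. This may never happen: if every application of the lemma produces children whose bounding boxes all have height within $t$ of the parent's, the dominant branch drifts downward slowly (losing at most $t$ per step but at least one site in cardinality per step), and the iteration dead-ends when the tracked set reaches cardinality $\leq 3$ (so $y \leq 2$) without $s^*$ being defined. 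Even when $s^*$ is found, nothing forces $y^{(s^*)}\geq 2t$: the chain $y^{(0)}\geq 2t > y^{(1)} > \cdots$ can pass below $2t$ before the within-step criterion fires, and then the vertex $w$ you declare a splitter violates precision condition (1), which requires splitters to have $y\geq 2t$. The same problem afflicts the intermediate vertices $v_1,\ldots,v_{m-1}$ you insert when subdividing the long normal edge: they must be normal vertices with $y\geq 2t$, which again is not guaranteed once the tracked height has dropped below $2t$. To repair this you would need either to change the stopping rule to a cumulative one as in the paper (where Cases 1, 2 and 3 of the paper's proof correspond to whether the cumulative drop falls in $[t,2t]$ or exceeds $2t$ at $T=1$ or at $T\geq 2$), or to add a case in which the tracked set is converted into a seed once its height falls below $2t$, together with the bookkeeping of the preceding normal chain. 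As written, the construction is not well-defined on all inputs.

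Two smaller remarks. First, your verification of occurrence condition (3) via the observation that a full rectangle is stable under the $3$-of-$6$ rule (so $\langle[\T^{(s^*)}_1]\cup\cdots\cup[\T^{(s^*)}_i]\rangle\subseteq[\T^{(s^*)}]$, and equality of bounding rectangles follows) is correct and a nice explicit justification that the paper leaves implicit. Second, when $s^*=0$ you create a normal edge $v\to w$ with $R_v=R_w$; this is technically harmless (the event $A_k(R_w,R_v)$ is then trivial and its support is empty) but the paper avoids it by making the root a splitter directly in that case.
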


\begin{pf}The proof is an induction on the height (the
$y$-dimension) of the rectangle. Let $\S$ be an internally filled
connected set, and let $R=[\S]$. If $y(R)<2t$, then the hierarchy
with\vadjust{\goodbreak}
only one vertex $r$ and $R_r=R$ $k$-occurs. Consider that $y(R)\geq
2t$, and assume that the proposition holds for any rectangle with
height less than $y(R)$.

First observe that, using Lemma \ref{disjointspanning}, there exist
$m_1$ disjoint connected sets $\S^1_1, \ldots, \S^1_{m_1}$ spanning $\S
$ (with associated rectangles called $R^1_1=[\S^1_1], \ldots,\break
R^1_{m_1}=[\S^1_{m_1}]$). Assume that $R^t_{1},\ldots,R^t_{m_t}$ are
defined; while one of the sets is of cardinality greater than $3$, it
is possible to define iteratively $R^{t+1}_1=[\S
^{t+1}_1],\ldots, R^{t+1}_{m_{t+1}}=[\S^{t+1}_{m_{t+1}}]$. This is
obtained by harnessing Lemma \ref{disjointspanning} iteratively. Stop
at the first time step, called $T$, for which the rectangle $R'$ with
smallest height satisfies $y(R)-y(R') \geq t$ ($R'$ obviously exists
since the height of $R$ is greater than $2t$). Three possibilities can occur:

\textit{Case 1}: $y(R)-y(R') \leq2t$.
Since $R'$ is crossed, the induction hypothesis claims that there
exists a hierarchy of precision $t$, called $\HH'$, with root $r'$ and
root-label $R_{r'}=R'$. Furthermore, the event $A_k(R',R)$ occurs
(since $R$ is $k$-crossed), and it does not depend on the configuration
inside $R'$. We construct $\HH$ by adding the root $r$ with label $R$
to the hierarchy $\HH'$. This hierarchy is indeed a hierarchy of
precision $t$, and it occurs since the event $A_k(R',R)$ is disjoint
from the other events appearing in $\HH'$.

\textit{Case 2}: $y(R)-y(R')>2t$ and $T=1$ (the algorithm stopped at
time~1).
There exist $m_1$ rectangles $R_1,\ldots,R_{m_1}$ corresponding to
connected sets created by the algorithm at time $T=1$. Moreover, $R$ is
the smallest rectangle containing $\langle R_1\cup\cdots\cup
R_{m_1}\rangle$. It is easy to see that events $\{R_i\mbox{ is
}k\mbox{-crossed}\}$ occur disjointly due to the fact that sets $\S
_i$ are disjoint. By the induction hypothesis, there exist $m_1$
hierarchies $\HH_i$ ($i=1,\ldots,m_1$) such that events in these
hierarchies depend only on $\S_i$. The hierarchy created by adding the
root $r$ with label $R_r=R$ is a hierarchy of precision $t$ because
$y(R)-y(R_i)\geq t$ for some $i\leq m_1$ (the algorithm stopped at time~1). Moreover, the hierarchy $k$-occurs since $R$ is the smallest
rectangle containing $\langle R_1\cup\cdots\cup R_{m_1}
\rangle$ and hierarchies $\HH_i$ ($i=1,\ldots,m_1$) $k$-occur disjointly.

\textit{Case 3}: $y(R)-y(R')>2t$ and $T\geq2$.
Consider the rectangle $R''$ from which $R'$ has been created and
denote by $R_1,\ldots,R_m$ its other ``descendants'' (set $R_1=R'$). There
exist hierarchies $\HH_1,\ldots, \HH_m$ associated to $R_1,\ldots,R_m$
which occur disjointly. Consider a root $r$ with label $R$ and a second
vertex $y$ with label $R''$. One can construct a hierarchy through the
process of adding $m+1$ additional edges $(r,y)$ and $(y,r_j)$ for
$j=1,\ldots,m$ where $r_j$ is the root of $\HH_j$. This hierarchy
$k$-occurs. It is therefore sufficient to check that it is a hierarchy
of precision $t$. To do so, notice that $y(R'')-y(R)\leq t$ and
$y(R'')-y(R_1)\geq t$ [since $y(R)-y(R'')\leq t$ and $y(R)-y(R')>2t$].
\end{pf}

\subsection{Proof of the upper bound}\label{sec33}

We\vspace*{1pt} want to bound the probability of a hierarchy $\HH$ with precision
$\frac T{p}\log\frac1p$ to occur. Let
\[
\Lambda^p_T= \inf\Biggl\{\sum_{n=0}^N W^p(R_n,R_{n+1}):(R_n)_{0\leq
n\leq N}\in\mathfrak{D}^p_T\Biggr\},
\]
where $\mathfrak{D}^p_T$ denotes the set of finite increasing
sequences of rectangles $R_0\subset\cdots\subset R_N$ such that:
\begin{itemize}
\item$x_0\leq p^{-1-2T}$;\vspace*{1pt}
\item$y_0\leq\frac{2T}p\log\frac1p$ and $y_N\geq\frac1{3p}\log
\frac1p$;
\item$y_{n+1}-y_n\leq\frac Tp\log\frac1p$ for $n=0,\ldots,N-1$.
\end{itemize}
Before estimating the probability of a hierarchy, we bound $\Lambda
_T^p$ when $p$ and $T$ go to 0.
\begin{proposition}
We have
\[
\liminf_{T\rightarrow0}\liminf_{p\rightarrow0}\Lambda^p_T\geq
\frac1{6}.
\]
\end{proposition}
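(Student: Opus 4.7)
My plan has two halves. The upper bound is essentially already in the paper: the explicit sequence $R_n^p=[0,p^{-1-3n/\log(1/p)}]\times[0,n/p]$ for $k_0\le n\le N\sim \frac{1}{3}\log\frac{1}{p}$ used in the proof of Proposition \ref{creation of a critical rectangle} lies in $\mathfrak{D}^p_T$ for every fixed $T>0$ and $p$ small enough (its step in $y$ is $1/p\le \frac{T}{p}\log\frac{1}{p}$ and $x_{k_0}\le p^{-1-2T}$ eventually), and its $W^p$-weights were shown there to form a Riemann sum converging to $\int_0^{1/3}(1-3s)\,ds=\frac{1}{6}$. So $\liminf_{p\to 0}\Lambda_T^p\le \frac{1}{6}$ for every $T>0$.

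For the matching lower bound, I rescale: for $(R_n)\in \mathfrak{D}_T^p$ set $s_n:=py_n/\log(1/p)$ and $u_n:=\log x_n/\log(1/p)$. The constraints become $s_0\le T$, $u_0\le 1+2T$, $s_N\ge \frac13$, $s_{n+1}-s_n\le T$, with both sequences nondecreasing. Using $e^{-3py_{n+1}}=p^{3s_{n+1}}$, $-\log(p^2x_{n+1})=(2-u_{n+1})\log\frac{1}{p}$ and $-\log p=\log\frac{1}{p}$, a direct case analysis on the three regimes in the definition of $W^p$ gives the identity
\begin{equation*}
W^p(R_n,R_{n+1})=W^p_x(n)+(s_{n+1}-s_n)\,h(u_{n+1}),\qquad W^p_x(n):=\tfrac{p\,(x_{n+1}-x_n)e^{-3py_{n+1}}}{(\log(1/p))^2}\ge 0,
\end{equation*}
where $h(u):=\max(0,\min(1,2-u))$ is nonincreasing and satisfies $h(3s+1)=1-3s$ for $s\in[0,1/3]$.

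The main obstacle, and the technical heart of the argument, is to force $u_n\le 3s_n+1+o(1)$ on the interval $\{s_n\le 1/3\}$ along any path of bounded cost; once this is established the conclusion is a standard Riemann-sum estimate. Fix $\delta>2T$ and suppose for contradiction that $n^\ast$ is minimal with $s_{n^\ast}\le 1/3$ and $u_{n^\ast}>3s_{n^\ast}+1+\delta$. Then $u_{n^\ast}-u_0\ge \delta-2T>0$, so $x_0=p^{-u_0}\le x_{n^\ast}/2$ for $p$ small. Using $e^{-3py_{n+1}}\ge p^{3s_{n^\ast}}$ for $n+1\le n^\ast$ and telescoping the nonnegative $x$-contributions,
\begin{equation*}
\sum_{n=0}^{n^\ast-1}W^p(R_n,R_{n+1})\ge \sum_{n=0}^{n^\ast-1}W^p_x(n)\ge \frac{p\,p^{3s_{n^\ast}}(x_{n^\ast}-x_0)}{(\log(1/p))^2}\ge \frac{p^{1+3s_{n^\ast}-u_{n^\ast}}}{2(\log(1/p))^2}\ge \frac{p^{-\delta}}{2(\log(1/p))^2},
\end{equation*}
which tends to $+\infty$ as $p\to 0$. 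Consequently, for $p$ small, every $(R_n)\in\mathfrak{D}^p_T$ with cost $\le 1$ satisfies $u_n\le 3s_n+1+\delta$ throughout $\{s_n\le 1/3\}$.

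Under this constraint, monotonicity of $h$ gives
\begin{equation*}
\sum_n W^p(R_n,R_{n+1})\ge \sum_n (s_{n+1}-s_n)\,h(3s_{n+1}+1+\delta)=\sum_n (s_{n+1}-s_n)\max\bigl(0,1-3s_{n+1}-\delta\bigr),
\end{equation*}
a right Riemann sum of mesh $\le T$ for the decreasing integrand on $[0,s_N]\supset[0,(1-\delta)/3]$, hence bounded below by $\int_0^{(1-\delta)/3}(1-3s-\delta)\,ds-O(T)=\frac{(1-\delta)^2}{6}-O(T)$. Choosing $\delta:=3T$ yields $\Lambda^p_T\ge \frac{(1-3T)^2}{6}-O(T)=\frac{1}{6}-O(T)$ for $p$ small, whence $\liminf_{p\to 0}\Lambda^p_T\ge \frac{1}{6}-O(T)$. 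Sending $T\to 0$ and combining with the upper bound concludes the proof.
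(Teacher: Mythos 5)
Your proof is correct and follows essentially the same route as the paper's: the rescaling $(s_n,u_n)$ is the paper's $(Y_n,X_n)$ up to names, the decomposition $W^p = W^p_x + (s_{n+1}-s_n)h(u_{n+1})$ is exactly the paper's rewriting, the dichotomy (either some $u_{n^\ast}>3s_{n^\ast}+1+\delta$ forces the telescoped $x$-contribution $\sim p^{-\delta}/(\log\frac1p)^2\to\infty$, or the $y$-contribution is a Riemann sum for $\int_0^{1/3}(1-3s)\,ds$) is the paper's case split "$1-X_{n+1}+3Y_{n+1}<-2\varepsilon$ vs.\ $\ge -2\varepsilon$", and the upper bound via the explicit sequence $R_n^p$ is the same computation used in Proposition~\ref{creation of a critical rectangle}. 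The only cosmetic difference is that you bound the telescoping sum using the single uniform estimate $e^{-3py_{n+1}}\ge p^{3s_{n^\ast}}$ rather than the paper's two-term splitting, and you absorb both the left endcap $[0,s_0]$ and the mesh error into a single $O(T)$; also note that as written, the sequence $(R_n^p)_{k_0\le n\le N}$ has $y_N = (\tfrac13\log\tfrac1p - \log k_0)/p$, strictly less than $\tfrac1{3p}\log\tfrac1p$, so it needs to be extended by one cheap step to literally land in $\mathfrak{D}^p_T$.
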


\begin{pf}
Let $\varepsilon,p>0$. Choose $0<T<\varepsilon$ so that
\[
\int_T^\infty\max\{1-3y,0\}\,dy=\frac16-\varepsilon.
\]
For two rectangles $R$ and $R'$ with dimensions $(x,y)$ [resp.,
$(x',y')$], define $x=p^{-X}$ and $y=\frac Yp (\log\frac
1p)$ where $X,Y\in\mathbb{R}_+$ [resp., $x'=p^{X'}$ and
$y'=\frac{Y'}p(\log\frac1p)$]. With these notations, we obtain
%
\begin{eqnarray}
W^p(R,R')&=&\frac{p}{(\log(1/p)
)^2}(p^{-X'}-p^{-X})e^{-3Y'\log(1/p)}\nonumber\\
&&{}-(Y'-Y)\frac{\inf\{
\log(p^2p^{-X'}),\log p\}}{\log(1/p)}\\
&=&\frac{(p^{1-X'+3Y'}-p^{1-X+3Y'})}{(\log
(1/p))^2}+(Y'-Y)\inf\{2-X',1\}.\nonumber
\end{eqnarray}
Consider a sequence of rectangles $(R_n^p)$ in $\mathfrak{D}^p_T$, then
\begin{eqnarray*}
\sum_{n=0}^N W^p(R_n^p,R_{n+1}^p)&=&\sum_{n=0}^N\frac{
(p^{1-X_{n+1}^p+3Y_{n+1}^p}-p^{1-X_n^p+3Y_{n+1}^p})}{(\log
(1/p))^2}\\
&&{}+\sum_{n=0}^N(Y_{n+1}^p-Y_n^p)\inf\{2-X_{n+1}^p,1\}.
\end{eqnarray*}
First assume that there exists $n$ such that
$1-X_{n+1}^p+3Y_{n+1}^p<-2\varepsilon$ for some values of $p$ going to
0. Since
\[
-p^{1-X_{m}^p+3Y^p_{m+1}}+p^{1-X_{m}^p+3Y^p_{m}}\quad \mbox{and}\quad
p^{1-X_{m+1}^p+3Y^p_{m+1}}-p^{1-X_{m}^p+3Y^p_{m+1}}
\]
are positive for every $m$, the first sum is larger than
\[
\frac{p^{1-X_{n+1}^p+3Y_{n+1}^p}-p^{1-X_0^p+3Y_1^p}}{(\log
(1/p))^2}\geq\frac{p^{-2\varepsilon}-p^{3Y_1^p-\varepsilon
}}{(\log(1/p))^2}\rightarrow\infty.
\]
We can thus assume that for any sequence $\liminf\sum
_{n=0}^{N-1}(1-X_{n+1}^p+3Y_{n+1}^p)\geq-2\varepsilon$. We deduce
\begin{eqnarray*}
&&\liminf\sum_{n=0}^N (Y_{n+1}^p-Y_n^p)\inf\{
2-X_{n+1}^p,1\}\\[-3pt]
&&\qquad\geq\liminf\sum_{n=0}^N (Y_{n+1}^p-Y_n^p)\max\{
1-3Y_{n+1}^p-2\varepsilon, 0\}\\[-3pt]
&&\qquad\geq\int_T^\infty\max\{1-3(y+2T)-2\varepsilon,0\}\,dy=\frac
16-9\varepsilon,
\end{eqnarray*}
where in the last line we used $Y_{n+1}^p\leq Y_n^p+2T$. The claim
follows readily.\vspace*{-3pt}~%
\end{pf}

The two following lemmata are easy (yet technical for the second) but
fundamental in the proof of Proposition \ref{upperboundcrossed}
below. They authorize us to control the probability of a hierarchy even
though there could be many seeds (and even large seeds).\vspace*{-3pt}

\begin{lemma}\label{simplelemma}
Let $k\geq1$, $p>0$ and let $R$ be a rectangle with dimensions
$(x,y)$. Then we have for any $a,b>0$,
\[
\P[R\ k\mbox{-crossed}]\leq p^{-k}\ \P
\bigl[A_k([0,a]\times[0,b],[0,a+x]\times[0,b+y])\bigr].\vspace*{-3pt}
\]
\end{lemma}

\begin{pf}
Simply note that if the rectangle $[a,a+x]\times[b,b+y]$ is crossed,
and $\{0\}\times[a+1,a+k]$ is full, then $A_k([0,a]\times
[0,b],[0,a+x,b+y])$ occurs. The proof follows using the FKG inequality.\vspace*{-3pt}
\end{pf}

Let $N(\mathcal H)$ be the number of vertices in the hierarchy.\vspace*{-3pt}

\begin{lemma}\label{lesssimplelemma}
Let $\varepsilon,T>0$, there exist $p_0,Q,k>0$ such that for $p<p_0$,
we have the following:\vspace*{-1pt} Let $\HH$ be a hierarchy of precision $\frac
T{p}\log\frac1p$ with root label $R$ satisfying $\frac1{3p}\log
\frac1p\leq y(R)\leq\frac1{p}\log\frac1p$, then there exists
$N\geq1$ and $R_0\subset\cdots\subset R_N$ rectangles satisfying the
following properties:
\begin{itemize}
\item$R_N$ has dimensions larger than $R$;
\item$R_0$ has dimensions
\[
\biggl(\sum_{u\mathrm{\ seed}}x(R_u),\sum_{u\mathrm{\ seed}}y(R_u)\biggr);
\]
\item$y(R_{n+1})-y(R_n)\leq\frac{2T}p\log\frac1p$ for every $0\leq
n\leq N-1$;
\item we have
\begin{eqnarray*}&&\prod_{v\mapsto w}\P[A_k(R_w,R_v)]\\[-3pt]
&&\qquad\leq
\bigl(p^{-k}Q^{y(R)}\bigr)^{N(\HH)}\prod_{n=0}^N\exp\biggl[-(1-\e
)W^p(R_n,R_{n+1})\frac1p\biggl(\log\frac1p\biggr)^2\biggr].\vadjust{\goodbreak}
\end{eqnarray*}
\end{itemize}
\end{lemma}

\begin{pf}
Let $\varepsilon,T>0$. Fix $p_0,Q,k>0$ so that Proposition \ref{upperboundcrossing} applies with $\varepsilon$ and $T$. Invoking
Proposition \ref{upperboundcrossing} [note that $\frac Tp\log\frac
1p\leq y(R_v)\leq y(R)$ for a normal vertex], we know
\begin{eqnarray*}
\hspace*{-3pt}\prod_{v\mapsto w}\P[A_k(R_w,R_v)]
\leq
\prod_{v\mapsto w}Q^{y(R_w)-y(R_v)}\exp\biggl[-(1-\e
)W^p(R_w,R_v)\frac1p\biggl(\log\frac1p\biggr)^2\biggr].
\end{eqnarray*}
It is thus sufficient to find an increasing sequence of rectangles
satisfying the three first conditions such that
\[
\sum_{v\mapsto w}W^p(R_w,R_v)\geq\sum_{n=0}^N
W^p(R_n,R_{n+1})-kN_\mathrm{splitter}(\HH)\log p,
\]
where $N_\mathrm{splitter}$ is the number of splitters in the hierarchy.
This can be done by induction. If the root of the hierarchy is a seed,
the result is obvious since the sums are empty.

If the root $r$ of the hierarchy is a normal vertex, then we consider
the hierarchy with root $v$ being the only descendant of $r$. By
induction there exists a sequence satisfying all the assumptions. By
setting $R_{N+1}$ with dimensions $x(R_N)+x(R_r)-x(R_v)$ and
$y(R_N)+y(R_r)-y(R_v)$, we obtain from the decreasing properties of
$W^p(\cdot,\cdot)$ that $W^p(R_N,R_{N+1})\leq W^p(R_v,R_r)$. The
claim follows readily.

If the root $r$ of the hierarchy is a splitter, then we consider the
hierarchies with roots $v_1,\ldots, v_i$ ($i\in\{2,3\}$) being the
descendant of $r$. There exist sequences $(R^{(i)}_n)_{n\leq N_i}$ for
each of these hierarchies. Consider the following
sequence:
\[
 R_1^{(1)},\ldots,R_{N_1}^{(1)}, R_{N_1}^{(1)}+R_{1}^{(2)},\ldots,R_{N_1}^{(1)}+\cdots
+R_{N_i}^{(i)}, \max\bigl(R,R_{N_1}^{(1)}+\cdots+R_{N_i}^{(i)}\bigr),
\]
where $R+R'$ is any rectangle with dimensions being the sum of the
dimensions of $R$ and $R'$, and $\max(R,R')$ is a rectangle with
dimensions being the maximum of the dimensions of $R$ and $R'$.

Since the dimensions of $R_{N_1}^{(1)}+\cdots+R_{N_i}^{(i)}$ can exceed
those of $R$ by at most~3 (some space is allowed when combining two or
more sets: they are only weakly connected), we obtain via a simple
computation that
\[
W^p\bigl[R_{N_1}^{(1)}+\cdots+R_{N_i}^{(i)},\max\bigl(R,R_{N_1}^{(1)}+\cdots
+R_{N_i}^{(i)}\bigr)\bigr]\leq-k\log p.
\]
We deduce that removing the last rectangle in the sequence costs at
most $k\log p$. The sequence then satisfies all the required conditions.
\end{pf}

\begin{proposition}\label{upperboundcrossed}
Let $\varepsilon>0$. Then there exist $k,p_0>0$ such that
\[
\P(\S\mbox{ is internally filled})\leq\exp\biggl[-\biggl(\frac
16-\e\biggr)\frac1p\biggl(\log\frac1p\biggr)^2\biggr]\vspace*{6pt}
\]
for $p<p_0$ and $\S$ a connected set satisfying $\frac1{3p}\log\frac
1p\leq y([\S])\leq\frac1{p}\log\frac1p$.\vadjust{\goodbreak}
\end{proposition}

\begin{pf}Choose $T,p_0>0$ such that $\Lambda^p_T\geq\frac
16-\varepsilon$ for any $p<p_0$. Consider a connected set $\S$
satisfying the conditions of the proposition, and set $R=[\S]$. First
assume that $x(R)\geq p^{-5}$. Then a simple computation implies that
\begin{eqnarray*}
\P[\S\mbox{ internally filled}]&\leq&\P[R\mbox{ hor.
traversable}]\\
&\leq&\exp\bigl[-(1-\e)p^{-5}e^{-3\log(1/p)}
\bigr]\leq\exp[-p^{-2}],
\end{eqnarray*}
and the claim follows in this case. Therefore, we can assume that
$x(R)\leq p^{-5}$.

Since $\S$ is internally filled, there exists a hierarchy $\HH$ of
precision $\frac T{p}\log\frac1p$ with root label $R$ that occurs
(Proposition \ref{hierarchy}). Moreover, the number of possible
hierarchies is bounded by
\[
N_{t}(R)\leq\biggl[p^{-5}+\frac1p\log\frac1p\biggr]^{c[
(1/p)\log(1/p)/( (T/p)\log(1/p))]}\leq p^{-6c(1/T)}
\]
when $p$ is small enough (Lemma \ref{numberofhierarchies}). We deduce
\begin{eqnarray*}
&&\P(R\mbox{ is crossed})\\
&&\qquad\leq p^{-6c(1/T)}\max\biggl\{\P[\HH\mbox{ occurs}]\
:\ \HH\mbox{ of precision }\frac T{p}\log\frac1p\mbox{ and root
}R\biggr\}.
\end{eqnarray*}

Bounding the probability of $\S$ being internally filled boils down to
estimating the probability for a hierarchy to occur.

\begin{claim*} The probability that a hierarchy $\HH$ of precision
$\frac T{p}\log\frac1p$ with root label~$R$ is $k$-occurring is
smaller than $\exp[-(\frac16-3\e)\frac1p(\log\frac1p)^2]$.
\end{claim*}

\begin{pf}
Let $\HH$ be a hierarchy. First assume that
there exists one seed with root label $R'$ satisfying $x(R')\geq
p^{-1-2T}$. Then the probability that this seed is horizontally
traversable is smaller than $\exp-p^{-1-T}$ when $p$ is small enough
(same computation as usual). The claim follows easily in this case.

We now assume that $x(R')\leq p^{-1-2T}$ for every seed of the
hierarchy. Using Lemma \ref{lesssimplelemma}, there exist rectangles
$R_0\subset\cdots\subset R_N$ satisfying the conditions of the lemma
such that
\begin{eqnarray*}
&&\prod_{v\mapsto w}\P[A_k(R_u,R_v)]\\
&&\qquad\leq
\bigl(p^{-k}Q^{y(R)}\bigr)^{N(\HH)}\prod_{n=0}^N\exp\biggl[-(1-\e
)W^p(R_n,R_{n+1})\frac1p\biggl(\log\frac1p\biggr)^2\biggr].
\end{eqnarray*}
Using Lemma \ref{simplelemma}, we can transform this expression into
\[
\prod_{u\ \mathrm{seed}}\P[R_u\ \mathrm{crossed}]\leq
p^{-kN(\HH)}\prod_{n=1}^{N_\mathrm{seed}(\HH)}\P[A_k(\tilde
{R}_i,\tilde{R}_{i+1})],
\]
where $N_\mathrm{seed}(\HH)\leq N(\HH)$ is the number of seeds of $\HH$ and
\[
\tilde{R}_i=[0,x(R_{u_1})+\cdots+x(R_{u_i})]\times
[0,y(R_{u_1})+\cdots+y(R_{u_i})].\vadjust{\goodbreak}
\]
(In the previous formula, we have indexed the seeds by
$u_1,\ldots,u_{\tilde{N}}$.) We conclude that
\begin{eqnarray*}
&&\P[\HH\mbox{ occurs}]\\
&&\qquad\leq\prod_{u\ \mathrm{seed}}\P[R_u\mbox{ crossed} ]\prod_{u\mapsto v}\P
[A_k(R_u,R_v)]\\
&&\qquad\leq\bigl(p^{-2k}Q^{y(R)}\bigr)^{N(\HH)}\\
&&\quad\qquad{}\times \exp\Biggl\{-(1+\e
)\Biggl[\sum_{n=0}^{\tilde{N}}W^p(\tilde{R}_i,\tilde{R}_{i+1})+\sum
_{n=0}^NW^p(R_i,R_{i+1})\Biggr]\frac1p\biggl(\log\frac1p
\biggr)^2\Biggr\}\\
&&\qquad\leq e^{\e(1/p)(\log(1/p))^2}\exp\biggl[-(1-\e)\biggl(\frac
16-\e\biggr)\frac1p\biggl(\log\frac1p\biggr)^2\biggr]
\end{eqnarray*}
since the sequence $\tilde{R}_0,\ldots,\tilde{R}_{\tilde
{N}},R_0,\ldots,R_N$ is in $\mathfrak{D}_T^p$ for $p$ small enough (we
have excluded the case where the seeds are too large), the number of
vertices is uniformly bounded by $3^{2/T}$ when $p$ goes to 0 and
$y(R)\leq\frac1p\log\frac1p$.
\end{pf}

We can now conclude by the following computation:
\begin{eqnarray*}\P(\S\mbox{ internally filled})
&\leq& p^{-kc(1/T)}\exp\biggl(-\biggl(\frac16-3\e\biggr)\frac
1p\biggl(\log\frac1p\biggr)^2\biggr)\\
&\leq&\exp\biggl[-\biggl(\frac16-4\e\biggr)\frac1p\biggl(\log
\frac1p\biggr)^2\biggr]
\end{eqnarray*}
for $p$ small enough.
\end{pf}

\begin{pf*}{Proof of lower bound in Theorem \protect\ref{maintheorem}}Let $p_0,k>0$
be such that
\[
\P(\S\mbox{ internally filled})\leq e^{-(1/6-\e)(1/p)
(\log(1/p))^2}
\]
for any $p<p_0$ and any connected set $\S$ such that
\[
\frac1{3p}\log\frac1p\leq y([\S])\leq\frac{1}{p}\log\frac1p.
\]

Let $E$ be the event that the origin is spanned by the configuration\vspace*{-1pt} in
$[-\frac1p\log\frac1p, \frac1p\log\frac1p]^2$,\vspace*{1pt} the probability of
this event goes to 0. Indeed, two cases are possible. Either the origin
is occupied, which occurs with probability $p$ going to 0, or the
origin is not occupied at time 0. In this case, there must exist a
neighborhood containing 3 occupied sites at distance less than $\frac
1p\log\frac1p$ to the origin. This probability goes to 0 when $p$
goes to 0.

Let $F$ be the event\vspace*{-1pt} that no rectangle in $[-L,L]^2$ of perimeter
between $\frac1{3p}\log\frac1p$ and $\frac1{p}\log\frac1p$ is
crossed, where
\[
L=\exp\biggl[\biggl(\frac1{12}-\e\biggr)\frac1p\biggl(\log\frac
1p\biggr)^2\biggr].
\]
In this case, Lemma \ref{disjointspanning} implies that no connected
set $\S\subset[-L,L]^2$ such that $y([\S])\geq\frac1p\log\frac
1p$ is crossed.\vspace*{-1pt} Indeed, if it was the case, we could construct a
connected set $\S$ with $\frac1{3p}\log\frac1p\leq y([\S])\leq
\frac{1}{p}\log\frac1p$ which is internally filled using Lemma \ref
{disjointspanning}.

It is easy to see that if $E$ and $F$ hold, we must have $\log T\geq
(\frac1{12}-\e)\frac1p(\log\frac1p)^2$. Indeed, the
information must necessarily come from outside the box $[-L,L]^2$. Then
\begin{eqnarray*}
&&\P\biggl[\log T\leq\biggl(\frac1{12}-\e\biggr)\frac1p\biggl(\log
\frac1p\biggr)^2\biggr]\\
&&\qquad\leq\P(E)+\P(F)\\
&&\qquad\leq\P(E)+p^{-8}e^{2(1/{12}-\e)(1/p)(\log
(1/p))^2}e^{-(1/6-\e)(1/p)(\log(1/p))^2},
\end{eqnarray*}
where $p^{-8}$ bounds the number of possible dimensions of $[\S]$ and
$\exp[2(\frac1{12}-\e)\frac1p(\log\frac1p)^2]$ the
number of possible locations for its bottom-left corner. When $p$ goes
to 0, the right-hand side converges to 0 and the lower bound follows.
\end{pf*}

\begin{remark} Recently, improvements of estimates for the threshold
of simple bootstrap percolation have been proved~\cite{Morris,GravnerHolroydMorris}.
It is an interesting question to try to improve
the estimates in our case. We mention that a major difficulty will come
from the fact that ``small'' seeds are not excluded in the hierarchy growth.
\end{remark}

\begin{remark}
Our approach gives a much improved upper bound, compared to \cite
{Mountford93,Mountford95}, for the semi-oriented bootstrap
percolation. But at this point we do not have a sharp threshold result,
due to the lack of a corresponding argument for the lower bound.
\end{remark}
\section*{Acknowledgments}
The first author would like to thank Stanislav Smirnov for his constant support. The
second author is grateful to FOM for its longstanding support of the
Mark Kac seminar; this gave rise to a visit of the first author to the
Netherlands, which is when our collaboration started.


%


\printaddresses

\end{document}